\documentclass[orivec]{llncs}
\usepackage{amsmath,amssymb,amsfonts}
\usepackage{theorem}
\usepackage{graphicx}
\usepackage{times}
\usepackage{color}
\usepackage{mathrsfs}
\usepackage{upgreek}
\usepackage{amsmath}
\usepackage{todonotes}

\newtheorem{Def}{Definition}
\newtheorem{Theo}{Theorem}
\newtheorem{cor}{Corollary}
\newtheorem{rem}{Remark}
\newtheorem{pro}{Proposition}
\newtheorem{examp}{Example}
\newtheorem{Lemma}{Lemma}
\newcommand{\rea}{\mathbb{R}}

\newcommand{\nat}{\mathbb{N}}
\newcommand{\x}{\textbf{\textit{x}}}
\newcommand{\y}{\textbf{\textit{y}}}
\newcommand{\z}{\textbf{\textit{z}}}

\pagestyle{plain}  

\begin{document}

\title {Decomposition approaches to integration}
\author{
Salvatore Greco\thanks{\email{salgreco@unict.it}}\inst{1}\inst{2}
\and Radko Mesiar\thanks{\email{radko.mesiar@stuba.sk}} \inst{3}\inst{4}
\and Fabio Rindone\thanks{\email{frindone@unict.it}} \inst{1}
\and Ladislav Sipeky\thanks{\email{ladislav.sipeky@stuba.sk}}\inst{3}}
\institute{Department of Economics and Business\\
95029 Catania, Italy\\
\and
Portsmouth Business School, Operations \& Systems Management University of Portsmouth, Portsmouth PO1 E, United Kingdom\\
\and
Department of Mathematics and Descriptive Geometry, Faculty of Civil Engineering\\
Slovak University of Technology\\
 Bratislava, Slovakia\\
\and
Institute of Theory of Information and Automation\\
Czech Academy of Sciences\\
Prague, Czech Republic\\
}

\maketitle
\begin{abstract}
Following the idea of Even and Lehrer \cite{EvenLehrer2014}, we discuss a general approach to integration based on decomposition of the integrated function. 
We distinguish sub-decomposition based integrals (in economics linked with optimization problems to maximize the possible profit) and super-decomposition based integrals (linked with costs minimization).
We provide several examples (both theoretical and realistic) to stress that our approach generalizes that of Even and Lehrer \cite{EvenLehrer2014} and also covers problems of linear programming and combinatorial optimization.
Finally, we introduce some new types of integrals related to optimization tasks. 
\end{abstract}

\section{Introduction}
The idea of decomposition of the integrated function $f$ for the integration purposes is a basic feature of constructions / definitions of integrals since ever. Recall, e.g., Eudoxus of Cnidus (408–-355 BC) exhaustion principle, Riemann and Lebesgue integrals (lower and upper integral sums), etc.. Integration always merges two sources of information, the integrated function and weights of special functions used for decomposition purposes (e.g., measures assigning weights to sets, i.e., to characterize functions of sets), into a single representative value. In this contribution, we will deal with non-negative (measurable) functions and non-negative weights only, supposing always the monotonicity of the considered weights, and vanishing of such weights for  null functions. Both from transparency of our ideas as well as for the application purposes in economics and multicriteria decision support, we will always deal with a fixed finite space $N=\{1,\ldots,n\}$, where $n\in\mathbb{N}$ is a fixed positive integer. Then the power set $2^N$ being considered excludes any measurability constraints. Each function $f:N\to[0,\infty[=\mathbb{R}_+$ can be seen as an n-dimensional vector $\x\in\mathbb{R}_+^n$, $\x=(x_1,\ldots,x_n)=(f(1),\ldots,f(n))$. The aim of this contribution is a proposal of a general approach to decomposition based integration, distinguishing sub-decompositions and super-decompositions. We will stress several integrals known from the literature as particular instances of our approach. Moreover, several new types of integrals related to optimization tasks will be introduced and exemplified. 
The paper is organized as follows.
In Section \ref{subdec} we propose the idea of sub-decomposition based integrals and, similarly, super-decomposition approach to integration is discussed in Section \ref{superdec}. We provide several examples of application of decomposition integrals, both theoretical as well as realistic.
In Section \ref{relation} we confront our approach with previous research in literature, especially with the idea of Even and Lehrer \cite{EvenLehrer2014}. 
Particular decomposition based integrals are discussed in Section \ref{particular}. 
Finally, some concluding remarks and formal proposal for future researches are added in Section \ref{concluding}. 

\section{Sub-decomposition based integrals}\label{subdec}

Any system of vectors of $\mathbb{R}_+^n$, $(\x^i)_{i=1}^k=(\x^1,\ldots,\x^k)\in(\rea^n_+)^k$ with $k\in\nat$, is called a \textit{collection}, and the set of all collections is $\mathcal{R}_n=\cup_{k\in\nat}(\rea^n_+)^k$. 
A \textit{decomposition system} is any $\mathcal{D}\subseteq\mathcal{R}_n$ 
such that there exists $\x\neq \boldsymbol 0=(0,\ldots,0)$ with $\x\in (\x^i)_{i=1}^k$ for some collection  $(\x^i)_{i=1}^k\in \mathcal{D}$.

Given a decomposition system $\mathcal{D}$, we denote
$$\tilde{\mathcal{D}}=\left\{\x \in\mathbb{R}_+^n|\ \x \in(\x^i)_{i=1}^k\ \mbox{ for some collection } (\x^i)_{i=1}^n\in\mathcal{D}\right\}.$$

Conversely, for any $X\subseteq\rea^n_+$, we define 
$$\mathcal D_X=\{(\x^i)_{i=1}^k\in\mathcal{R}_n\ |\ \x^i\in X\ \mbox{for all} \ i=1,\ldots,k\}$$ 
as the complete decomposition system generated by $X$, and clearly $\tilde{\mathcal D}_X=X$ and, moreover, $\mathcal D_X$ is the union of all decomposition systems $\mathcal D$ such that $\tilde{\mathcal D}=X$.

\begin{Def}\label{weighting}

Let $\mathcal{D}$ be a decomposition system. A mapping $A:\tilde{\mathcal{D}}\to\mathbb{R}_+$ is called a weighting function on $\mathcal{D}$ whenever
\begin{itemize}
    \item $A(\x)\leq A(\y)$ if $\x,\y\in\tilde{\mathcal{D}}, \x\leq\y$ (monotonicity),
    \item $A(\x)>0$ for some $\x\in\tilde{\mathcal{D}}$ and $A(\mathbf{0})=0$ whenever $\mathbf{0}\in\tilde{\mathcal{D}}$ (boundary conditions).

\end{itemize}
\end{Def}

Observe that if $\tilde{\mathcal{D}}=\mathbb{R}_+^n$, then any weighting function A can be seen as an aggregation function (in the sense of \cite{grabisch2009aggregation}, with related boundary condition, i.e., $\sup \{A(\x)\ |\ \x\in\mathbb{R}_+^n\}=+\infty$ replaced by $\sup \{A(\x)\ |\ \x\in\mathbb{R}_+^n\}>0$).

The following example is inspired to Even and Lehrer \cite[example in Section 2]{EvenLehrer2014}.

\begin{examp}\label{workers}

Consider two different work agencies $\mathcal A_1$ and $\mathcal A_2$.
Each agency provides a couple of workers with exactly the same skills. 
However, each of the four workers can work alone, or together with one or more partners.
The possible teams are identified with  
\[
\mathcal T=\left\{0,1,2\right\}^2\setminus\left\{\left(0,0\right)\right\}\subseteq \nat_0^2,
\]
where $(1,0)$, $(0,1)$ represent basic teams formed by a single worker from agency $\mathcal A_1$ and $\mathcal A_2$ respectively, while, e.g., $(2,1)$ is the team formed by the two workers from $\mathcal A_1$ and one indifferently chosen from $\mathcal A_2$. 
Suppose we know the efficiency of each team, measured in some work unit, given by the weighting function 
$E:\mathcal T\rightarrow \rea_+$:
\begin{equation}
\left\{
\begin{array}{l}
E(1,0)=1.0\\
E(2,0)=2.2\\
E(0,1)=1.1\\
E(0,2)=2.0\\
E(1,1)=2.2\\
E(2,1)=3.5\\
E(1,2)=3.0\\
E(2,2)=4.3\\
\end{array} \right.
\label{eq:E}
\end{equation}
Clearly, we want to maximize our efficiency by choosing the best group of teams within the decomposition system 
(let us note that $\tilde{\mathcal D}_\mathcal T=\mathcal T$)
\[
\mathcal D_\mathcal T=\left\{\left(\x^j\right)_{j=1}^k\ |\ \x^j\in\mathcal T\ \mbox{with}\ \sum_{j=1}^k{\x^j}\le (2,2)\right\}.
\]
We will return on this example later.

\end{examp}

Let $\mathcal{D}\subseteq \mathcal{R}_n$ be a decomposition system and let $A:\tilde{\mathcal{D}}\to\mathbb{R}_+$ be a weighting function on $\mathcal{D}$. From now, we call $(A,\mathcal{D})$ a \textit{base for integration} on $\rea^n_+$ (shortly, a \textit{base}).
Given a base $(A,\mathcal{D})$, a vector $\x\in\rea^n_+$ is called $(A,\mathcal{D})$-sub-integrable if (we use the convention $\sup\emptyset=0$)

\begin{equation}\label{subdE}
\sup{\{\sum_{j=1}^k A(\y^j)\ |\ (\y^j)_{j=1}^k \in \mathcal{D},\ \sum_{j=1}^k\y^j\leq \x \}}<+\infty,
\end{equation}
and we define the set of $(A,\mathcal{D})$-sub-integrable vectors as

\[
S_{\left(A,\mathcal D\right)}=\left\{\x\in\rea^n_+\ | \ \x\mbox{ is $(A,\mathcal{D})$-sub-integrable }\right\}.
\]
Let us note that $S_{(A,\mathcal D)}\neq\emptyset$, since the null vector $\boldsymbol 0$ is $(A,\mathcal{D})$-sub-integrable for any base. 

Now, we can introduce our sub-decomposition based integral. 

\begin{Def}\label{subdecompositionI}
Let $(A,\mathcal{D})$ be a base for integration on $\rea^n_+$, then the $(A,\mathcal{D})$-based sub-decomposition integral is the functional $I_{(A,\mathcal{D})}:S_{\left(A,\mathcal D\right)}\to\mathbb{R}_+$ defined by 
\begin{equation}\label{subdE1}
I_{\left(A,\mathcal{D}\right)}(\x)=\sup{\{\sum_{j=1}^k A(\y^j)\ |\ (\y^j)_{j=1}^k \in \mathcal{D},\ \sum_{j=1}^k\y^j\leq \x \}}.
\end{equation}
\end{Def}

The following Lemma \ref{mon} follows directly by definitions of $S_{\left(A,\mathcal D\right)}$ and
$I_{\left(A,\mathcal D\right)}$.

\begin{lemma}\label{mon}
For all $\y\in S_{\left(A,\mathcal D\right)}$ and $\x\leq\y$, then $\x\in S_{\left(A,\mathcal D\right)}$ and $I_{\left(A,\mathcal D\right)}(\x)\le I_{\left(A,\mathcal D\right)}(\y)$.
\end{lemma}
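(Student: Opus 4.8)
The plan is to exploit the fact that the supremum defining $I_{\left(A,\mathcal{D}\right)}(\x)$ and the one in the $(A,\mathcal{D})$-sub-integrability condition \eqref{subdE} are taken over exactly the same feasible set, so the whole statement reduces to a monotonicity-of-suprema argument over nested feasible sets. First I would fix $\y\in S_{\left(A,\mathcal D\right)}$ and $\x\leq\y$, and observe that any collection $(\y^j)_{j=1}^k\in\mathcal{D}$ which is admissible for $\x$, i.e.\ satisfies $\sum_{j=1}^k\y^j\leq\x$, is also admissible for $\y$: indeed, by transitivity of the componentwise order on $\rea^n_+$ we get $\sum_{j=1}^k\y^j\leq\x\leq\y$. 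Hence
\[
\Bigl\{\textstyle\sum_{j=1}^k A(\y^j)\ \big|\ (\y^j)_{j=1}^k\in\mathcal{D},\ \sum_{j=1}^k\y^j\leq\x\Bigr\}\subseteq\Bigl\{\textstyle\sum_{j=1}^k A(\y^j)\ \big|\ (\y^j)_{j=1}^k\in\mathcal{D},\ \sum_{j=1}^k\y^j\leq\y\Bigr\}.
\]

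Next, I would take suprema on both sides. Using the convention $\sup\emptyset=0$ together with the fact that $A$ takes values in $\rea_+$ (so every element of either set, and in particular the value $0$, is a lower bound that causes no difficulty), the supremum of the left-hand set is bounded above by the supremum of the right-hand set. Since $\y\in S_{\left(A,\mathcal D\right)}$, the right-hand supremum is finite by \eqref{subdE}; therefore the left-hand supremum is finite as well, which is precisely the statement $\x\in S_{\left(A,\mathcal D\right)}$. The same inequality between the two suprema, read through Definition \ref{subdecompositionI}, is exactly $I_{\left(A,\mathcal D\right)}(\x)\le I_{\left(A,\mathcal D\right)}(\y)$, completing the proof.

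There is essentially no serious obstacle here: the only points requiring a little care are (i) noting that sub-integrability of $\x$ is inherited from that of $\y$ because the relevant supremum is over a smaller set, and (ii) handling the degenerate case in which no collection in $\mathcal{D}$ is admissible for $\x$, where the $\sup\emptyset=0$ convention makes $I_{\left(A,\mathcal D\right)}(\x)=0\le I_{\left(A,\mathcal D\right)}(\y)$ trivially. Neither the weighting-function axioms of Definition \ref{weighting} nor any structural property of $\mathcal{D}$ beyond being a decomposition system are needed.
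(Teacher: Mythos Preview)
Your proof is correct and matches the paper's own treatment: the paper does not give a detailed argument at all, stating only that Lemma~\ref{mon} ``follows directly by definitions of $S_{\left(A,\mathcal D\right)}$ and $I_{\left(A,\mathcal D\right)}$.'' Your nested-feasible-set argument is precisely the unpacking of that remark.
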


\begin{rem}\label{super}
Let $\mathcal D$ be a decomposition system and let $B:\rea^n_+\rightarrow \rea_+$ be an aggregation function which is  super-additive $[B(\x+\y)\ge B(\x)+B(\y)]$,
then for the weighting function $A=B|_{\tilde{\mathcal{D}}}$ it holds $S_{\left(A,\mathcal D\right)}=\rea^n_+$ and 
$I_{(A,\mathcal{D})}(\x)\le B(\x)$ for each $\x\in\rea^n_+$.
Obviously, if $(\x)\in\mathcal{D}$, then $I_{(A,\mathcal{D})}(\x)= B(\x)$.
\end{rem}

Regarding the domain $S_{\left(A,\mathcal D\right)}$ of sub-decomposition integral $I_{\left(A,\mathcal D\right)}$, this depends on both $A$ and $\mathcal{D}$. Suppose that $(A,\mathcal{D})$ and $(A',\mathcal{D'})$ are two bases such that $\mathcal{D}\subseteq \mathcal{D'}$ and $A\le A'|_{\tilde{\mathcal{D}}}$, then $S_{\left(A,\mathcal D\right)}\supseteq S_{\left(A',\mathcal{D'}\right)}$ and 
$I_{\left(A,\mathcal{D}\right)}\le I_{\left(A',\mathcal{D'}\right)}$ on $S_{\left(A',\mathcal{D'}\right)}$.
This will be clear also in the following relevant examples.

\begin{examp}
Consider, e.g., $n=2$, $\mathcal{D}=\mathcal{R}_2$ and $A:\mathbb{R}_+^2\to \mathbb{R}_+$ given by $A(x,y)=x$. 
Then $S_{\left(A,\mathcal{R}_2\right)}=\rea^n_+$ and $I_{\left(A,\mathcal{R}_2\right)}(x,y)=x$.
If we consider the weighting function $A'(x,y)=x+\sqrt{y}$, then $S_{\left(A',\mathcal{R}_2\right)}=\left\{(x,0)\ |\ x\in\mathbb{R}_+\right\}$ and $I_{\left(A',\mathcal{R}_2\right)}(x,0)=x$, while for any $x\in\mathbb{R}_+$ and $y>0$, $(x,y)$ is non $(A',\mathcal{R}_2)$-sub-integrable, indeed, being $A'$ sub-additive, 
$\sup{\{\sum_{i=1}^k A'(x_i,y_i)\ |\ \sum_{i=1}^k (x_i,y_i)=(x,y)\}}\ge$ $ \lim_{n\rightarrow +\infty}nA'(\frac{x}{n},\frac{y}{n})=\lim_{n\rightarrow+\infty}(x+\sqrt{n}y)=+\infty$. 
\end{examp}

Consider a set of objects (criteria) $N=\{1,\ldots,n\}$, and define a \textit{chain} a system $\left(E_j\right)_{j=1}^k$ such that $ E_1 \subseteq \ldots \subseteq E_k\subseteq N$. Let $\mathcal{D}$ be the set of all collections $\left(c_j\cdot\mathbf{1}_{E_j}\right)_{j=1}^k$, being $c_j$ positive constants and $\left(E_j\right)_{j=1}^k$ a chain.
Now consider the weighting functions $A:\tilde{\mathcal{D}}\to \mathbb{R}_+$, determined by a monotone measure $m:2^N\to\mathbb{R}_+$ ($m(\emptyset)=0$, $m(N)>0$, and $m(E_1)\leq m(E_2)$ whenever $E_1 \subseteq E_2\subseteq N $), by means of $A(c\cdot\mathbf{1}_E)=c\cdot m(E)$.
In this case $S_{\left(A,\mathcal D\right)}=\rea^n_+$ and $I_{\left(A,\mathcal D\right)}$ is the Choquet integral \cite{choquet1953theory} with respect to measure $m$.

Other that for the Choquet integral, in majority of integrals known so far (Lebesque, Choquet, Shilkret, Concave, Pan, etc. integrals), decomposition systems $\mathcal{D}$  such that any $\x\in\tilde{\mathcal{D}}$ can be written in the form $c\cdot\mathbf{1}_E$, where $c$ is a positive constant and $E$ a subset of $N$ ($\mathbf{1}_E$ is the corresponding characteristic function) are considered, and the corresponding weighting functions $A:\tilde{\mathcal{D}}\to \mathbb{R}_+$ are then determined by $A(\x)=A(c\cdot\mathbf{1}_E)=c\cdot m(E)$, being $m:2^N\to\mathbb{R}_+$ monotone measures. 
Hence all these integrals are covered by our approach. For more details see Sections \ref{relation} and \ref{particular}.

Typical economical problems deal with finite number of goods $g_1,\ldots,g_n$, and then weight (price) is assigned to groups of goods represented by multisets, i.e., vectors $\x\in\mathbb{N}_0^n$ where $\mathbb{N}_0=\{0,1,2,\ldots\}$. Note that due to limitations in storing/production, $\tilde{\mathcal{D}}$ is then mostly finite.
For this purpose, the next result is important for real applications.

\begin{pro}\label{base3}
For any base $(A,\mathcal{D})$ such that $\tilde{\mathcal{D}}$ is finite, then $S_{\left(A,\mathcal D\right)}=\rea^n_+$.
\end{pro}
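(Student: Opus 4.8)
The plan is to use the hypothesis that $\tilde{\mathcal D}$ is finite in two ways: to bound the values of the weighting function $A$ from above by a single constant, and --- more importantly --- to bound the number of \emph{nonzero} summands that may occur in any competitor $(\y^j)_{j=1}^k\in\mathcal D$ with $\sum_{j=1}^k\y^j\le\x$. Once both bounds are available, the supremum defining $(A,\mathcal D)$-sub-integrability in \eqref{subdE} is majorized by an explicit finite quantity depending only on $\x$, and the claim follows.

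Concretely, first set $M=\max\{A(\y)\mid \y\in\tilde{\mathcal D}\}$, which is finite since $\tilde{\mathcal D}$ is a finite set. Second, observe that the definition of a decomposition system forces $\tilde{\mathcal D}\setminus\{\boldsymbol 0\}\ne\emptyset$, so that $\delta:=\min\{\sum_{i=1}^n y_i\mid \y=(y_1,\dots,y_n)\in\tilde{\mathcal D},\ \y\ne\boldsymbol 0\}$ is the minimum of a nonempty finite set of strictly positive reals, hence $\delta>0$. Now fix $\x=(x_1,\dots,x_n)\in\rea^n_+$ and let $(\y^j)_{j=1}^k\in\mathcal D$ satisfy $\sum_{j=1}^k\y^j\le\x$. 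Writing $k_0$ for the number of indices $j$ with $\y^j\ne\boldsymbol 0$, the summands equal to $\boldsymbol 0$ contribute $0$ to $\sum_{j=1}^k A(\y^j)$ by the boundary condition $A(\boldsymbol 0)=0$ (applicable whenever $\boldsymbol 0\in\tilde{\mathcal D}$); summing all $n$ coordinates of the inequality $\sum_{j=1}^k\y^j\le\x$ gives $\sum_{j=1}^k\sum_{i=1}^n(\y^j)_i\le\sum_{i=1}^n x_i$, and since every nonzero $\y^j\in\tilde{\mathcal D}$ satisfies $\sum_{i=1}^n(\y^j)_i\ge\delta$, we get $k_0\,\delta\le\sum_{i=1}^n x_i$, i.e.\ $k_0\le\bigl(\sum_{i=1}^n x_i\bigr)/\delta$. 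Hence $\sum_{j=1}^k A(\y^j)\le k_0 M\le M\bigl(\sum_{i=1}^n x_i\bigr)/\delta$, a bound independent of the particular collection; taking the supremum shows $\x\in S_{(A,\mathcal D)}$, and as $\x$ was arbitrary, $S_{(A,\mathcal D)}=\rea^n_+$.

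There is no genuine obstacle; the two points that deserve care are (i) the elimination of the zero summands, which is exactly the role of the boundary condition $A(\boldsymbol 0)=0$ --- without it one could append arbitrarily many copies of $\boldsymbol 0$ and blow up the sum --- and (ii) the strict positivity of $\delta$, which rests on $\tilde{\mathcal D}$ containing at least one nonzero vector (guaranteed by the definition of a decomposition system) together with finiteness, so that the defining minimum is attained and positive rather than merely an infimum equal to $0$. Note that monotonicity of $A$ plays no part in this argument.
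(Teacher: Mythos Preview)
Your proof is correct, but it proceeds along a different line than the paper's own argument, and the comparison is instructive.

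The paper's proof lists $\tilde{\mathcal D}=\{\x^1,\dots,\x^m\}$ (with each $\x^j\ne\boldsymbol 0$) and, for a fixed $\y\in\rea^n_+$, chooses for every $j$ an integer $n_j$ such that $n_j\x^j$ strictly exceeds $\y$ in at least one coordinate. Any admissible collection $(\y^j)_{j=1}^k\in\mathcal D$ with $\sum_j\y^j\le\y$ is just a multiset from $\tilde{\mathcal D}$; if $\x^j$ occurs $k_j$ times, then $k_j\x^j\le\y$ forces $k_j<n_j$, and hence $\sum_j A(\y^j)\le\sum_{j=1}^m n_jA(\x^j)<\infty$. So the paper bounds the multiplicity of each individual vector in $\tilde{\mathcal D}$ via a coordinatewise exceedance argument.

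You instead bound the \emph{total} number of nonzero summands in one stroke, via the $\ell^1$-type inequality $k_0\,\delta\le\sum_i x_i$, combined with the uniform bound $M$ on $A$. This is arguably cleaner: it yields the explicit linear estimate $I_{(A,\mathcal D)}(\x)\le (M/\delta)\sum_{i=1}^n x_i$, which immediately shows that $I_{(A,\mathcal D)}$ is dominated by a weighted sum (cf.\ the remark following Corollary~\ref{base1}). The paper's bound $\sum_j n_jA(\x^j)$ depends on the target vector $\y$ through the choice of the $n_j$ and does not directly exhibit this linearity. On the other hand, the paper's version does not require introducing the constant $M$ and works directly with the individual values $A(\x^j)$. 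Both arguments are short and rest on the same essential point: finiteness of $\tilde{\mathcal D}$ forces a bound on how many nonzero summands can fit below a given $\x$.
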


\begin{proof}
Let $\tilde{\mathcal{D}}=\{\x^1,\ldots,\x^m\}$ be finite, and without loss of generality, we can imagine that $\x^j\ne\boldsymbol 0$, $j=1,\ldots,m$. Now, for any $\y\in\rea^n_+$ there exist $n_1,\ldots,n_m\in\nat$ such that for each $j=1,\ldots,m$ the vector $n_j\x^j$ exceeds $\y$ in some component.
Thus, we have
\[
\sup{\{\sum_{j=1}^k A(\y^j)\ |\ (\y^j)_{j=1}^k \in \mathcal{D},\ \sum_{j=1}^k\y^j\leq \y    \}}\leq n_1A(\x^1)+\ldots + n_kA(\x^m)<+\infty.
\]

\begin{flushright}
$\square$
\end{flushright}

\end{proof}

\begin{rem}
If $(A,\mathcal{D})$ is a base such that $S_{\left(A,\mathcal D\right)}=\rea^n_+$, then 
$I_{\left(A,\mathcal D\right)}:\rea^n_+\rightarrow \rea_+$ is a weighting function on $\rea^n_+$. 
Indeed, monotonicity of $I_{\left(A,\mathcal D\right)}$ and condition $I_{\left(A,\mathcal D\right)}(\boldsymbol 0)=0$ follow by definition and, moreover, since $A$ is a weighting function, there exists $\x\in\tilde{\mathcal{D}}$ such that $A(\x)>0$; suppose that $\x$ belongs to the collection $(\y^j)_{j=1}^k \in \mathcal{D}$, it follows that $I_{\left(A,\mathcal D\right)}\left(\sum_{j=1}^k\y^j\right)\ge \sum_{j=1}^k A(\y^j) \ge A(\x)>0$.\\
Let us note that $I_{(A,\mathcal{D})}$ restricted on $\tilde{\mathcal{D}}$ is not, in general, a weighting function, consider, e.g., 
$\mathcal{D}\left\{\left(\left(1,3,0\right),\left(3,1,0\right)\right)\right\}$ with $A(1,3,0)=A(3,1,0)=2$, then 
$I_{\left(A,\mathcal D\right)}(1,3,0)=I_{\left(A,\mathcal D\right)}(3,1,0)=\sup\emptyset=0$. 
Now suppose there exists $\x\in \tilde{\mathcal{D}}$ such that $I_{\left(A,\mathcal D\right)}(\x)>0$ and then $I_{\left(A,\mathcal D\right)}$ is a weighting function on $\tilde{\mathcal{D}}$ and we can consider $I_{\left(I_{\left(A,D\right)},D\right)}$. However, in this case, the two weighing function $A$ and $I_{\left(A,\mathcal D\right)}$ are non comparable and also considering sub-decomposition integrals, we have that $I_{\left(A,\mathcal D\right)}$ is non comparable with $I_{\left(I_{\left(A,D\right)},D\right)}$, see Example \ref{comp}. 
\\
A case where $I_{\left(A,\mathcal D\right)}$ and $I_{\left(I_{\left(A,D\right)},D\right)}$ are comparable is when the weighting function $A$ is super-additive, since in this case for any $\x$ and any collection $(\y^j)_{J=1}^k\in\mathcal{D} $ such that $\sum_{j=1}^k\y^j\le\x$, it follows that $\sum_{j=1}^kA(\y^j)\le A(\sum_{j=1}^k\y^j)\le A(\x)$, and then 
$I_{\left(A,\mathcal D\right)}(\x)\le A(\x)$ and, consequently, $I_{\left(I_{\left(A,D\right)},D\right)}\le I_{\left(A,\mathcal D\right)}$.
\\
Finally, let us note that when $\mathcal D=\mathcal{R}_n$ and $S_{(A,D)} =\rea_+^n$,  then $I_{\left(A,\mathcal D\right)}= I_{\left(I_{\left(A,D\right)},D\right)}$.
\end{rem}

\begin{examp}\label{comp}
Consider  
$\mathcal{D}\left\{\left(\left(0,2,1\right),\left(2,0,0\right)\right),\left(\left(2,2,1\right),\left(0,1,2\right)\right),\left(\left(0,1,2\right)\right)\right\}$
and the weighting function  $A(0,2,1)=A(2,0,0)=A(0,1,2)=2$, $A(2,2,1)=3$.\\
It follows that $I_{\left(A,\mathcal D\right)}(0,2,1)=I_{\left(A,\mathcal D\right)}(2,0,0)=\sup\emptyset=0$, $I_{\left(A,\mathcal D\right)}(0,1,2)=2$ and $I_{\left(A,\mathcal D\right)}(2,2,1)=4$, $I_{\left(A,\mathcal D\right)}(2,3,3)=5$.\\
Finally, it is easily computed that 
$I_{\left(I_{\left(A,D\right)},D\right)}(2,2,1)=0$ and 
$I_{\left(I_{\left(A,D\right)},D\right)}(2,3,3)=6$. 
\end{examp}

When $\mathcal{D}=\mathcal{R}_n$, we are able to enunciate sufficient conditions for existence of 
$I_{\left(A,\mathcal{R}_n\right)}$ on all $\rea^n_+$ (for the proof of Theorem \ref{suff} and subsequent corollaries, see \cite{GMRS}).

\begin{Theo}\label{suff}
$S_{\left(A,\mathcal{R}_n\right)}=\rea^n_+$ if and only if the constant vector $\mathbf{1}=(1,\ldots,1)$ is $(A,\mathcal{D})$-sub-integrable.
\end{Theo}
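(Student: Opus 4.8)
The plan is to prove the two implications separately. The direction ``if $S_{(A,\mathcal{R}_n)}=\rea^n_+$ then $\mathbf{1}$ is $(A,\mathcal{R}_n)$-sub-integrable'' is immediate, since $\mathbf{1}\in\rea^n_+$. For the converse, put $M:=I_{(A,\mathcal{R}_n)}(\mathbf{1})<+\infty$ and fix an arbitrary $\x\in\rea^n_+$; choosing $N\in\nat$ with $\x\le N\mathbf{1}$ and invoking Lemma~\ref{mon}, it suffices to exhibit a bound on $\sum_{j=1}^{k}A(\y^j)$ that is uniform over all collections $(\y^j)_{j=1}^{k}\in\mathcal{R}_n$ with $\sum_{j=1}^{k}\y^j\le N\mathbf{1}$, so as to conclude that $N\mathbf{1}\in S_{(A,\mathcal{R}_n)}$ and hence that $\x\in S_{(A,\mathcal{R}_n)}$.

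Given such a collection, I would split its index set into the ``large'' part $J_1=\{\,j:\ y^j_i\ge 1\ \text{for some coordinate }i\,\}$ and the remaining ``small'' part $J_2$. Since $\sum_j y^j_i\le N$ for every $i$, at most $N$ indices can have $y^j_i\ge 1$ for a fixed $i$, so $|J_1|\le nN$; and because every term of a sum of non-negative vectors bounded by $N\mathbf{1}$ is itself $\le N\mathbf{1}$, monotonicity of $A$ gives $\sum_{j\in J_1}A(\y^j)\le nN\,A(N\mathbf{1})<+\infty$. For $j\in J_2$ one has $\y^j\le\mathbf{1}$ and still $\sum_{j\in J_2}\y^j\le N\mathbf{1}$. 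I would then pack these vectors by a first-fit rule: process them one at a time, placing $\y^j$ into the first already-opened bin whose running (coordinatewise) sum $\mathbf{s}$ still satisfies $\mathbf{s}+\y^j\le\mathbf{1}$, opening a fresh bin otherwise. This partitions $J_2$ into groups $G_1,\dots,G_m$ with sums $\mathbf{s}_1,\dots,\mathbf{s}_m\le\mathbf{1}$.

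The crux is the bound $m\le 2nN+1$. For $p<q$, bin $G_q$ was opened by an item that failed to fit into the (then already open) bin $G_p$, which forces $(\mathbf{s}_p)_i+(\mathbf{s}_q)_i>1$ for some coordinate $i=i(p,q)$; on the other hand, in each fixed coordinate $i$ at most $2N$ bins can satisfy $(\mathbf{s}_p)_i>\tfrac12$, because $\sum_p(\mathbf{s}_p)_i=\sum_{j\in J_2}y^j_i\le N$. Hence the bins that are ``light in every coordinate'' (i.e. $(\mathbf{s}_p)_i\le\tfrac12$ for all $i$) cannot occur in a pair, so there is at most one of them, giving $m\le 1+2nN$. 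Since each $G_p$ is a collection in $\mathcal{R}_n$ with $\sum_{j\in G_p}\y^j\le\mathbf{1}$, we get $\sum_{j\in G_p}A(\y^j)\le I_{(A,\mathcal{R}_n)}(\mathbf{1})=M$, whence $\sum_{j\in J_2}A(\y^j)\le(2nN+1)M$. Adding the two estimates yields $\sum_{j}A(\y^j)\le nN\,A(N\mathbf{1})+(2nN+1)M$, a bound independent of the collection, which completes the argument.

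I expect the only genuinely delicate step to be the bin-count estimate $m\le 2nN+1$ — specifically, converting the local information ``a later item did not fit into an earlier bin'' into the global statement that two bins light in every coordinate cannot coexist. Everything else reduces to monotonicity of $A$, the reduction to $N\mathbf{1}$ via Lemma~\ref{mon}, and the elementary counting of how many terms of a bounded collection can be ``large''.
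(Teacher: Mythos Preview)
Your argument is correct. The forward implication is trivial, and for the converse your two-stage estimate is sound: the ``large'' indices are at most $nN$ in number and each contributes at most $A(N\mathbf{1})<\infty$; the ``small'' vectors are handled by first-fit packing, and your key observation---that for any two bins $p<q$ some coordinate satisfies $(\mathbf{s}_p)_i+(\mathbf{s}_q)_i>1$, so at most one bin can be light in every coordinate---is exactly right (the inequality follows because the item opening $G_q$ is dominated by $\mathbf{s}_q$ and the running sum of $G_p$ at that moment is dominated by the final $\mathbf{s}_p$). One minor sharpening: the strict inequality $\sum_p(\mathbf{s}_p)_i\le N$ together with $(\mathbf{s}_p)_i>\tfrac12$ actually gives at most $2N-1$ heavy bins per coordinate, but your looser $2N$ suffices.

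As for comparison with the paper: the paper does not prove this theorem in the text---it defers the proof (and those of the subsequent corollaries) to the companion reference \cite{GMRS}. Your self-contained bin-packing argument therefore supplies what the present paper omits; whether it coincides with the proof in \cite{GMRS} cannot be judged from this source alone.
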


\begin{cor}\label{base1}
Let $A:\rea^n_+\to\rea_+$ be a weighting function on $\mathcal{R}_n$ such that for each $\y\in\rea^n_+$, 
$A(\x)\leq c\cdot\max{\{y_1,\ldots,y_n\}}$, where $c$ is some fixed constant from $\left]0,\infty\right[$. Then $S_{\left(A,\mathcal{R}_n\right)}=\rea^n_+$.
\end{cor}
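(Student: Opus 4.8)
The plan is to deduce the corollary from Theorem~\ref{suff}. Since here $\mathcal{D}=\mathcal{R}_n$, that theorem tells us that $S_{(A,\mathcal{R}_n)}=\rea^n_+$ as soon as the constant vector $\mathbf{1}=(1,\ldots,1)$ is $(A,\mathcal{R}_n)$-sub-integrable, i.e.\ as soon as
\[
\sup\Bigl\{\,\sum_{j=1}^{k}A(\y^j)\ \big|\ (\y^j)_{j=1}^{k}\in\mathcal{R}_n,\ \sum_{j=1}^{k}\y^j\le\mathbf{1}\,\Bigr\}<+\infty .
\]
So the whole problem reduces to producing a finite upper bound, uniform in $k$ and in the collection, for the sums $\sum_{j=1}^{k}A(\y^j)$ ranging over all admissible collections.

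First I would fix an arbitrary admissible collection $(\y^j)_{j=1}^{k}$ (so that $\sum_{j}\y^j\le\mathbf{1}$ componentwise) and estimate term by term using the hypothesis: $A(\y^j)\le c\cdot\max\{y^j_1,\ldots,y^j_n\}\le c\sum_{i=1}^{n}y^j_i$, where the last step uses that all components are non-negative. Summing over $j$ and exchanging the two finite sums then gives
\[
\sum_{j=1}^{k}A(\y^j)\ \le\ c\sum_{i=1}^{n}\sum_{j=1}^{k}y^j_i\ =\ c\sum_{i=1}^{n}\Bigl(\sum_{j=1}^{k}\y^j\Bigr)_i\ \le\ c\sum_{i=1}^{n}1\ =\ cn .
\]
Since $cn$ does not depend on $k$ or on the chosen collection, the supremum above is at most $cn<+\infty$; hence $\mathbf{1}$ is $(A,\mathcal{R}_n)$-sub-integrable and, by Theorem~\ref{suff}, $S_{(A,\mathcal{R}_n)}=\rea^n_+$.

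I do not expect any real obstacle here: the only points needing (minimal) care are that the constraint $\sum_{j}\y^j\le\mathbf{1}$ must be used componentwise and that the passage from $\max$ to $\sum$ relies on non-negativity of the components, both automatic in our setting. I would also read the hypothesis of the statement as ``$A(\x)\le c\cdot\max\{x_1,\ldots,x_n\}$ for every $\x\in\rea^n_+$'' (the symbol $\y$ appearing there being a typographical slip for $\x$), since it is exactly this form of the bound that makes the term-by-term estimate above work.
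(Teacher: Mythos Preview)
Your argument is correct. You reduce the claim to Theorem~\ref{suff} and then bound $\sum_{j}A(\y^j)$ uniformly by $cn$ via $A(\y^j)\le c\max_i y^j_i\le c\sum_i y^j_i$ and the componentwise constraint $\sum_j\y^j\le\mathbf{1}$; this is exactly the natural route, and your reading of the hypothesis (with $\x$ in place of $\y$) is the intended one.

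Note that the paper itself does not give a proof here: it refers the reader to \cite{GMRS} for the proofs of Theorem~\ref{suff} and its corollaries. Your argument is precisely what one would expect that reference to contain for this corollary, so there is nothing to compare beyond observing that your proof fills in what the paper leaves to the citation.
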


Due to Corollary \ref{base1}, also the domination by a weighted sum $W:\mathbb{R}_+^n\to\mathbb{R}_+$, $W(\x)=\sum_{i=1}^{n}w_{i}x_i$, with $\mathbf{w}=(w_1,\ldots,w_n)\in\mathbb{R}_+^n\setminus\{\mathbf{0}\}$, is sufficient to guarantee that
$S_{\left(A,\mathcal{R}_n\right)}=\rea^n_+$ (i.e., $A(\y)\leq W(\y)$ for each $\y\in\rea^n_+$ is considered).

\begin{cor}\label{base2}
Let $A:\rea^n_+\to\rea_+$ be a weighting function on $\mathcal{R}_n$ and let, for a fixed $\varepsilon>0$, 
$
\{
\frac{A(\y)}{k}|\ \y\in\rea^n_+,\ \max{\{y_1,\ldots,y_n\}}\leq k
\}
$ 
be bounded by a fixed constant $c$, independently of $k\in\left]0,\varepsilon \right]$. 
Then $S_{(A,\mathcal{R}_n)}=\rea^n_+$.
\end{cor}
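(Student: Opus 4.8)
The plan is to deduce the statement from Theorem~\ref{suff}: since here $\mathcal{D}=\mathcal{R}_n$, it suffices to check that the constant vector $\mathbf{1}=(1,\ldots,1)$ is $(A,\mathcal{R}_n)$-sub-integrable, i.e. that $\sup\{\sum_{j=1}^k A(\y^j)\mid (\y^j)_{j=1}^k\in\mathcal{R}_n,\ \sum_{j=1}^k\y^j\le\mathbf{1}\}<+\infty$. The first step is to turn the hypothesis into a pointwise estimate valid near the origin. If $\y\in\rea^n_+$ and $0<\max\{y_1,\ldots,y_n\}\le\varepsilon$, apply the assumed bound with $k:=\max\{y_1,\ldots,y_n\}\in\,]0,\varepsilon]$ to get $A(\y)\le c\,k=c\max\{y_1,\ldots,y_n\}\le c\sum_{i=1}^n y_i$; and for $\y=\mathbf{0}$ the boundary condition in Definition~\ref{weighting} gives $A(\mathbf{0})=0$, so the inequality $A(\y)\le c\sum_{i=1}^n y_i$ holds for every $\y$ with $\max\{y_1,\ldots,y_n\}\le\varepsilon$.

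Next I would fix an arbitrary collection $(\y^j)_{j=1}^k\in\mathcal{R}_n$ with $\sum_{j=1}^k\y^j\le\mathbf{1}$ and split the indices into the ``small'' ones $S=\{j:\max\{y^j_1,\ldots,y^j_n\}\le\varepsilon\}$ and the ``large'' ones $L=\{j:\max\{y^j_1,\ldots,y^j_n\}>\varepsilon\}$. Note that $\sum_{j=1}^k\sum_{i=1}^n y^j_i\le\sum_{i=1}^n 1=n$, because $\sum_j\y^j\le\mathbf{1}$ componentwise. For the small indices the estimate from the first step yields $\sum_{j\in S}A(\y^j)\le c\sum_{j\in S}\sum_{i=1}^n y^j_i\le cn$. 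For the large indices I would use two observations: each $\y^j$ satisfies $\y^j\le\sum_{l=1}^k\y^l\le\mathbf{1}$ (all entries are nonnegative), so monotonicity of $A$ gives $A(\y^j)\le A(\mathbf{1})$, a fixed finite number; and each large $\y^j$ contributes $\sum_{i=1}^n y^j_i\ge\max\{y^j_1,\ldots,y^j_n\}>\varepsilon$ to a total that is at most $n$, whence $|L|<n/\varepsilon$. Hence $\sum_{j\in L}A(\y^j)\le(n/\varepsilon)\,A(\mathbf{1})$.

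Adding the two bounds gives $\sum_{j=1}^k A(\y^j)\le cn+(n/\varepsilon)\,A(\mathbf{1})$, which does not depend on the chosen collection; therefore the supremum is finite, $\mathbf{1}$ is $(A,\mathcal{R}_n)$-sub-integrable, and Theorem~\ref{suff} gives $S_{(A,\mathcal{R}_n)}=\rea^n_+$. (The very same split, applied directly to an arbitrary $\x\in\rea^n_+$ with the bound $n$ replaced by $\sum_i x_i$ and $A(\mathbf{1})$ by $A(\x)$, yields a self-contained proof that bypasses Theorem~\ref{suff}.) I do not anticipate a real obstacle: the only genuinely non-mechanical points are to realise that the assumed bound ``$A(\y)/k\le c$ whenever $\max\{y_1,\ldots,y_n\}\le k$'' must be converted to $A(\y)\le c\max\{y_1,\ldots,y_n\}$ by taking $k$ equal to the sup-norm of $\y$, and to notice that in any admissible decomposition of $\mathbf{1}$ the pieces falling outside this near-origin zone are necessarily few (fewer than $n/\varepsilon$ of them) and each is controlled by $A(\mathbf{1})$ through monotonicity.
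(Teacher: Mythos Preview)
Your argument is correct. The paper itself does not supply a proof of this corollary: immediately before Theorem~\ref{suff} it writes ``for the proof of Theorem~\ref{suff} and subsequent corollaries, see \cite{GMRS}'', so there is no in-paper proof to compare against. Your route---extracting $A(\y)\le c\max_i y_i$ for $\max_i y_i\le\varepsilon$ by specialising the hypothesis to $k=\max_i y_i$, then splitting an arbitrary sub-decomposition of $\mathbf{1}$ into ``small'' pieces (controlled by this linear bound) and ``large'' pieces (at most $n/\varepsilon$ of them, each bounded by $A(\mathbf{1})$ via monotonicity)---is clean and complete, and your invocation of Theorem~\ref{suff} is exactly in the spirit of the paper, which presents Corollary~\ref{base2} as a consequence of that theorem. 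The parenthetical remark that the same split works directly for a general $\x$ (replacing $n$ by $\sum_i x_i$ and $A(\mathbf{1})$ by $A(\x)$) is also correct and gives a self-contained alternative.
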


The following example shows that, in general (i.e. when $\mathcal{D}\subsetneq \mathcal{R}_n$), Theorem \ref{suff} is not valid.

\begin{examp}
Consider in $\rea^2_+$ the following decomposition system 

\[
\mathcal{D}=
\left\{
\left(\left(1,1\right),\overbrace{\left(\frac{1}{n},\frac{1}{n}\right),\ldots,\left(\frac{1}{n},\frac{1}{n}\right)}^{n \ times}\right)\right\}_{n\in\nat}
\] 

Now, independently from the weighting function, $I_{(A,\mathcal{D})}(1,1)=\sup\emptyset=0$.
On the other hand, if we choose $A(x,y)=x+\sqrt{y}$, we have
\[
I_{(A,\mathcal{D})}(2,2)=\sup\left\{A(1,1)+\sum_1^n A\left(\frac{1}{n},\frac{1}{n}\right)\right\}_{n\in\nat}=\sup\left\{3+\sqrt{n}\right\}_{n\in\nat}=+\infty.
\]
\end{examp}

\begin{examp}\label{workers1}
Let us reconsider Example \ref{workers}.
To choose the best group of teams, we have to compute efficiency of various complete groups (i.e. where we use all four workers), which can be easily done due to small quantity of data. 

\begin{equation}
\left\{
\begin{array}{l}
2\cdot E(1,0) +E(0,2)=2.0+2.0=4.0\\
2\cdot E(1,0) +2\cdot E(0,1)=2.0+2.2=4.2\\
 E(2,0) +E(0,2)=2.2+2.0=4.2\\
E(2,0)+2\cdot E(0,1) =2.2+2.2=4.4.\\
E(1,2) +E(1,0)=3.0+1.0=4.0\\
 E(2,2) =4.3\\
 E(1,1) +E(1,0)+E(0,1) =2.2+1.0+1.1=4.3\\
2\cdot E(1,1) =4.4\\
 E(2,1)+E(0,1) =3.5+1.1=4.6= I_{(E,\mathcal D_\mathcal T)}(2,2).
\end{array} \right.
\label{eq:E1}
\end{equation}

System \eqref{eq:E1} underlines as the best solution corresponds to $I_{(E,\mathcal D_\mathcal T)}(2,2)$.
This example can be generalized, by thinking that the two agencies $\mathcal A_1$ and $\mathcal A_2$ can provide any number of workers and then the possible teams are identified with elements of $\mathcal T=\nat_0^2\setminus\{(0,0)\}$.
Supposing that we know the efficiency of all possible teams, expressed by the weighting function 
$E:\mathcal T\rightarrow \rea_+$ and supposing that the first agency provides $n_1$ workers and the second agency $n_2$, then the best group of teams correspond to decompositions of $(n_1,n_2)$ allowing the computation of $I_{(E,\mathcal D_\mathcal T)}(n_1,n_2)$.
For $n_1$ and $n_2$ large enough we need the use of linear programming techniques to compute $I_{(E,\mathcal D_\mathcal T)}(n_1,n_2)$, however $I_{(E,\mathcal D_\mathcal T)}(n_1,n_2)$ is the theoretical solution to the problem, in the sense that the sub-decomposition integral definition provides the algorithmic to solve the problem.
\end{examp}

Let us consider Examples \ref{workers} and \ref{workers1}. The optimal solution we found, $I_{(E,\mathcal D_\mathcal T)}(2,2)=4.6$, can be also obtained by using the concave integral \cite{Lehrer} and choosing an ``ad hoc'' measure as described in the following.
We identify the set of the four workers with $N=\{1,2,3,4\}$ where 1 and 2 are the two workers from the first agency and 2,3 those from the second. 
Consider the measure $\nu:2^N\rightarrow \rea_+$ given by $\mu(\emptyset)=0$, $\mu(1)=\mu(2)=E(1,0)$, $\mu(3)=\mu(4)=E(0,1)$,
$\mu(12)=E(2,0)$, $\mu(34)=E(0,2)$, $\mu(13)=\mu(14)=\mu(23)=\mu(24)=E(1,1)$, $\mu(123)=\mu(124)=E(2,1)$, $\mu(134)=\mu(234)=E(1,2)$ and $\mu(1234)=E(2,2)$.
Now the best solution for the problem proposed in Example \ref{workers} is given by 
$\int^{cav}{(1,1,1,1)d\nu}=1\cdot\nu(123)+1\cdot \nu(3)=4.6$.
Also the generalization of the problem discussed at the end of Example \ref{workers1} can be obtained using the concave integral, in the sense that $I_{(E,\mathcal D_\mathcal T)}(n_1,n_2)=\int^{cav}{\y d\nu}$ where 
$N=\{1,2,\ldots,(n_1+n_2)\}$, $\y=(1,1,\ldots,1)\in\nat^{n_1+n_2}$ and $\nu:2^N\rightarrow\rea_+$ is an opportune capacity.
However this is possible only because we have chosen an integer components vector $(n_1,n_2)$ and we have allowed only for decomposition of it in integer components vectors.
Suppose to have two numerical control machines $M_1$ and $M_2$ and they can work alone or together, the first machine depends on a parameter $\alpha_1$ and the second on a parameter $\alpha_2$, with $(\alpha_1,\alpha_2)\le(2\sqrt{2},2)$.
The possible setting of these two machines are identified with $\mathcal T=]0,\alpha_1]\times]0,\alpha_2]$, and we know the efficiency of each combination of these machines given by $E:\mathcal T\rightarrow \rea_+$.
finally the best setting for the couple of machines is obtained by solving $I_{(E,\mathcal D_\mathcal T)}(\alpha_1,\alpha_2)$. Suppose that $I_{(E,\mathcal D_\mathcal T)}(\alpha_1,\alpha_2)=E(2\sqrt{2},\sqrt{2})+E(0,2-\sqrt{2})$.
In this case no measure can be specified in order to solve the problem using the concave integral.

\section{Super-decomposition based integrals}\label{superdec}

We open this section with a realistic example, providing motivations to our approach to super-decomposition integral.

\begin{examp}\label{MC2}

Consider a Fast Food (FF) which, basically, offers three goods (\textit{basic-offers})
\[
g_1=\mbox{hamburger},\quad g_2=\mbox{chips},\quad g_3=\mbox{coke.}
\]

To increase the sales, the FF proposes also discounted \textit{compound-offers}, e.g. to buy conjointly 1 [hamburger + chips] is less expansive than 1 hamburger and 1 chips bought separately.
Let us suppose that the FF set of offers is 
$$\mathcal S=\left\{(1,0,0), (0,1,0),(0,0,1),(1,1,1),(2,0,0),(1,0,1),(0,1,1),(2,1,1)\right\},$$
where $(1,0,0)$, $(0,1,0)$ and $(0,0,1)$ represent, respectively, the basic offers hamburger, chips and coke, while, e.g., $(1,1,0)$ represents the compound offer [hamburger + chips]. 
To attract the consumers, FF propose a price function $P:\mathcal S\rightarrow \rea_+$, which is typically strictly sub-additive, i.e., 
$$
P(x,y,z)<\sum_{i=1}^n P(x_i,y_i,z_i),
$$
for all $(x,y,z),\ (x_i,y_i,z_i)\in \mathcal S$ such that $(x,y,z)=\sum_{i=1}^n (x_i,y_i,z_i)$, $n\ge 2$. For example, $P(1,1,1)<P(1,0,1)+P(0,1,0)<P(1,0,0)+P(0,1,0)+P(0,0,1)$.
Let us suppose that FF prices are 
$$
P(1,0,0)=2.80,\  P(0,1,0)=1.60,\  P(0,0,1)=1.80,\  P(1,1,1)=4.80,
$$$$
P(2,0,0)=P(1,0,1)=P(0,1,1)=3,\ \mbox{and}\  P(2,1,1)=5.50.
$$
Let us suppose also that a group of friends have to buy altogether 50 hamburgers, 30 chips and 60 cokes, and, obviously, they want to pay as little as possible by taking advantage of FF offers.
This is a linear programming problem, which can be formalized as follows ($x_a$ is integer quantity of $(1,0,0)$, $P_a=P(1,0,0)$ and so on) 

\begin{equation}
\left\{
\begin{array}{l}
P_G(50,30,60)=\min\{x_a P_a + x_b P_b + x_c P_c 
+ x_{aa} P_{aa} + x_{ac} P_{ac} + x_{bc} P_{bc}+ \\
+ x_{abc} P_{abc} + x_{aabc} P_{aabc}\}
 \\
x_a+2x_{aa}+ x_{ac}+ x_{abc}+ 2x_{aabc}=50\\
x_b + x_{bc}+ x_{abc} + x_{aabc} =30\\
x_c + x_{ac}+ x_{bc} + x_{abc} + x_{aabc}=60\\
x_a,x_b,\ldots,x_{aabc}\ \mbox{integer}.
\end{array} \right.
\label{eq:FF}
\end{equation}

But consider, for example, the necessity to buy 19 hamburgers, 10 chips and 10 cokes. Since 
$5.5\cdot 10 < 5.5\cdot9 + (2.8+1.6+1.8)$, we understand that to find the optimal solution, in equation \eqref{eq:FF} we must replace equality on constrains with inequality, i.e., 
\begin{equation}
\left\{
\begin{array}{l}
P_G(50,30,60)=\min\{x_a P_a + x_b P_b + x_c P_c 
+ x_{aa} P_{aa} + x_{ac} P_{ac} + x_{bc} P_{bc}+ \\
+ x_{abc} P_{abc} + x_{aabc} P_{aabc}\}
 \\
x_a+2x_{aa}+ x_{ac}+ x_{abc}+ 2x_{aabc}\ge 50\\
x_b + x_{bc}+ x_{abc} + x_{aabc} \ge30\\
x_c + x_{ac}+ x_{bc} + x_{abc} + x_{aabc}\ge60\\
x_a,x_b,\ldots,x_{aabc}\ \mbox{integer}.
\end{array} \right.
\label{eq:FF1}
\end{equation}

We will return on this example later, after introducing sub-decomposition based integrals.
\end{examp}

Sub-decomposition based integrals can be considered as an optimization problem to maximize the possible profit. In a dual way modeling the minimization of the costs, one can introduce super-decomposition based integrals.

However, there is a crucial difference concerning the possible inputs $\x\in\mathbb{R}_+^n$ to be evaluated by a super-decomposition based integral. Indeed, for a fixed decomposition system 
$\mathcal{D},\hat{\mathcal{D}}=\{\sum_{j=1}^{k}\y^j|\ \mathcal{B}=(\y^{j})_{j=1}^{k}\in\mathcal{D}\}$ 
is the set of maximal elements of the set of all elements $\x\in\mathbb{R}_+^n$ covered by some collection $\mathcal{B}$ from $\mathcal{D}$, i.e., a super-decomposition based integral can be defined only on the domains $\bar{\mathcal{D}}\subseteq\mathbb{R}_+^n$ given by $\bar{\mathcal{D}}=\{\x\in\mathbb{R}_+^n|\ \x\leq \sum_{j=1}^{k}\y^{j}$ for some collection $\mathcal{B}\in\mathcal{D}\}=\cup_{\y\in\hat{\mathcal{D}}}[\boldsymbol 0,\y]$. Obviously, if $\tilde{\mathcal{D}}=\mathbb{R}_+^{n}$ then also $\bar{\mathcal{D}}=\mathbb{R}_+^n$.\\
Given a base $\left(A,\mathcal D\right)$ and $\x\in\bar{\mathcal{D}}$, it results that 

$$
0\le\inf{\{\sum_{j=1}^{k}A(\y^{j})|\ \x\leq\sum_{j=1}^{k}\y^{j},\ (\y^{j})_{j=1}^{k}\in\mathcal{D}\}}<\infty.
$$
If there exists $\x\in\bar{\mathcal{D}}$ such that $\inf{\{\sum_{j=1}^{k}A(\y^{j})|\ \x\leq\sum_{j=1}^{k}\y^{j},\ (\y^{j})_{j=1}^{k}\in\mathcal{D}\}}>0$, $\left(A,\mathcal D\right)$ is called a base for sup-integration (shortly,a sup-base). For example, $\left(A,\mathcal \mathcal R_n\right)$ is not a feasible base for sup-integration when considering the product $A=\Pi$ or $A=\min$ [consider the decomposition $\x=(x_1,0,\ldots,0)+\ldots +(0,\ldots,0,x_n)$].

\begin{Def}\label{superDeI}
Let $(A,\mathcal{D})$ be a base for sup-integration on $\rea^n_+$, then the $(A,\mathcal{D})$-based super-decomposition integral is the functional $I^{(A,\mathcal{D})}:\bar{\mathcal{D}}\to\mathbb{R}_+$ defined by 
\begin{equation}
I^{(A,\mathcal{D})}(\x)=\inf{\{\sum_{j=1}^{k}A(\y^{j})|\ \x\leq\sum_{j=1}^{k}\y^{j},\ (\y^{j})_{j=1}^{k}\in\mathcal{D}\}},
\end{equation}
\end{Def}

Obviously, if $\mathcal{D}=\mathcal{R}_n$ (an then $\bar{\mathcal{D}}=\mathbb{R}_+^n$), then $I^{(A,\mathcal{D})}:\mathbb{R}_+^n\to\mathbb{R}_+$ is an aggregation function.

\begin{rem}\label{sub}
If an aggregation function $B:\rea^n_+\rightarrow \rea_+$ is sub-additive $[A(\x+\y)\le A(\x)+A(\y)]$, and if 
considering the weighting function $A=B|_{\tilde{\mathcal{D}}}$, the couple $\left(A,\mathcal D\right)$ is a sup-base,
then $I^{(A,\mathcal{D})}(\x)\ge B(\x)$ for each $\x\in\bar{\mathcal{D}}$.
Obviously, if $(\x)\in\mathcal{D}$, then $I^{(A,\mathcal{D})}(\x)= B(\x)$.
\end{rem}

\begin{examp}
Continuing in Example \ref{MC2}, we can assume 
$$\tilde{\mathcal D}=\mathcal S=\left\{(1,0,0), (0,1,0),(0,0,1),(1,1,1),(2,0,0),(1,0,1),(0,1,1),(2,1,1)\right\},$$
and $\mathcal D_\mathcal S$ is the decomposition system containing all collections building with elements from $\mathcal S$.
It is clear that the solution of problem \eqref{eq:FF1} (the minimal price that the group should pay to satisfy their constrains) is $I^{(P,\mathcal D_\mathcal S)}(50,30,60)$.
Using a linear programming solver it results 
$$I^{(P,\mathcal{S})}(50,30,60)=10\cdot P_{aabc}+30\cdot P_{ac}+20\cdot P_{bc}=205.$$

\end{examp}


\begin{examp}\label{suptconorm}
Let us consider the probabilistic sum (this is a weighting function and a t-conorm) $B:[0,1]^2\to\mathbb{R}_+$ given by $B(x,y)=x+y-xy$ and the decomposition system $\mathcal D_{[0,1]^n}=\{(\x^j)_{j=1}^k\in\mathcal R_n\  |\ \x^j\in[0,1]^n\ j=1,\ldots,k\}$.
Then $\tilde{\mathcal D}_{[0,1]^n}=[0,1]^n$, $\bar{\mathcal{D}}_{[0,1]^n}=\rea^n_+$, and $I^{(B,\mathcal{D}_{[0,1]^n})}:\mathbb{R}_+^2\to\mathbb{R}_+$ is given by
$$
I^{(B,\mathcal{D}_{[0,1]^n})}(x,y)=
\left\{
\begin{array}{ll}
(k+1)(x+y-k)-xy &
\begin{array}{l}
\textrm{if $(x,y)\in[k,k+1]^2$} \\
\textrm{for some $k\in\mathbb{N}$,}
\end{array}\\
\\
\max{(x,y)} &
\begin{array}{l}
\textrm{otherwise.}
\end{array}
\end{array} \right.
$$
Observe that $I^{(B,\mathcal{D}_{[0,1]^n})}$ can be seen as a pseudo-addition on $[0,\infty]$ (when extended by monotonicity also for infinite inputs), \cite{Sugeno1987197}, \cite{KlemMesPap2000}, $I^{(B,\mathcal{D})}=(<k,k+1,B>|\ k\in\mathbb{N}_0)$, i.e., it is associative, commutative aggregation function on $\mathbb{R}_+^2$ with neutral element $e=0$.
Let us note that $I^{(B,\mathcal{D}_{[0,1]^n})}(x,y)+I_{(A,\mathcal{D}_{[0,1]^n})}(x,y)=x+y$ for all $x,y\in[0,\infty]$, i.e., our integrals solves Frank's functional equation \cite{Frank1979}, \cite{KlemMesPap2000} on $[0,\infty]$.
\end{examp}

\mbox{}

\section{Relation with some other integrals}\label{relation}

Let $N=\{1,\ldots,n\}$ be a finite set and let $m:2^N\to\mathbb{R}_+$ be a monotone measure.
Even and Lehrer \cite{EvenLehrer2014} consider a decomposition set $\mathcal{H}$ being a non-empty set of finite systems\footnote{Effectively, Even and Lehrer \cite{EvenLehrer2014} speak about sets whereas we speak about systems. Precisely, they define a \textit{collection} $\mathcal D$ to be a set of subsets of $N$, i.e. $\mathcal D\subseteq 2^N$, and then they consider sets of collections. However their approach can be equivalently given using systems and this allow us to demonstrate that our approach is more general.}
of subsets of $N$, that is $\mathcal{H}=\left\{\mathcal C_1,\ldots,\mathcal C_k\right\}$, with $\mathcal C_i=\left(E^i_{j}\right)_{j=1}^{m_i}$ for all $i=1,\ldots,k$, being $E^i_j\subseteq N$ for all $j=1,\ldots,m_i$. 
The $\mathcal{H}$-decomposition integral is given by
\begin{equation}
I_{\mathcal{H},m}(\x)=\sup{\{\sum_{j=1}^{k}a_{j}m(E_j) |\ (E_j)_{j=1}^k\in\mathcal{H},\ a_1,\ldots,a_k\geq 0,\ \sum_{j=1}^{k}a_j\mathbf{1}_{E_{j}}\leq\x\}}.
\end{equation}
It is not difficult to check that then $I_{\mathcal{H},m}=I_{(A_m,\mathcal{D}_\mathcal{H})}$, where the decomposition system $\mathcal{D}_\mathcal{H}$ is defined by $\mathcal{D}_\mathcal{H}=\{(a_j\mathbf{1}_{E_j})_{j=1}^{k}|\ (E_j)_{j=1}^{k}\in\mathcal{H}, a_1,\ldots,a_k\geq 0\},$ and the weighting function  $A_m:\tilde{\mathcal{D}}_\mathcal{H}\to\mathbb{R}_+$ is given by $A_m(c\cdot \mathbf{1}_E)=c\cdot m(E)$. Thus our approach extends the proposal of Even and Lehrer \cite{EvenLehrer2014}. In particular, it holds:
\begin{itemize}
\item if $\mathcal{H}=\{(E)|E\subseteq N\}$, then $I_{(A_m,\mathcal{D}_\mathcal{H})}$ is the Shilkret integral \cite{shilkret1971maxitive};
\item if $\mathcal{H}= \{\left(E_j\right)_{j=1}^k\ |\ \left(E_j\right)_{j=1}^k$ is a chain$\}$, then $I_{(A_m,\mathcal{D_\mathcal{H}})}$ is the Choquet integral \cite{choquet1953theory};
\item if $\mathcal{H}= \{\left(E_j\right)_{j=1}^k\ |\ \{E_1,\ldots,E_k\}$ is a partition of $N \}$, then $I_{(A_m,\mathcal{D_\mathcal{H}})}$ is the PAN integral \cite{Yang1985}; if $m$ is additive, then the classical Lebesque integral is recovered;
\item if $\mathcal{H}=\{\left(E_j\right)_{j=1}^k\ |\ E_j\subseteq N, \ j=1,\ldots,k\}$, $I_{(A_m,\mathcal{D}_\mathcal{H})}$ is the concave integral \cite{Lehrer}.
\end{itemize}

The couple $(A,\nu)$ is defined a fuzzy capacity \cite{Lehrer} if $(1,\ldots,1)\in A\subseteq [0,1]^n$ and 
$\nu:A\rightarrow\rea_+$ is monotonic, continuous, and there is a positive K such that for every $\boldsymbol a=(a_1,\ldots,a_n)\in A$, it holds $\nu(\boldsymbol a)\le K\sum_{i=1}^n a_i$ .
The concave integral of $\x\in\rea^n_+$ with respect to the fuzzy capacity $(A,\nu) $\cite{Lehrer} is 
$$\int^{cav}{\x d(A,\nu)}=\sup\left\{\sum_{i=1}^k{\alpha_i\nu(\boldsymbol a_i)}\ |\ \boldsymbol a_i\in A, \alpha_i\ge 0, i=1,\ldots,n\mbox{ and }\sum_{i=1}^k{\alpha_i\boldsymbol a_i}\le\x\right\}.$$
If we consider $X=\left\{\alpha\cdot\boldsymbol a\ |\ \alpha\ge 0\mbox{ and }\boldsymbol a\in A\right\}$, 
$\mathcal D_X=\{(\x^i)_{i=1}^k\in\mathcal{R}_n\ |\ \x^i\in X\}$ and the weighting function
$A:X\rightarrow \rea_+$ defined by $A(\alpha\cdot\boldsymbol a)=\alpha\cdot \nu(\boldsymbol a)$ then, it results 
$I_{(A,\mathcal D_X)}(\x)=\int^{cav}{\x d(A,\nu)}$ for all $\x\in\rea_+^n$.\\
For several other integrals covered by our approach we recommend \cite{EvenLehrer2014} \cite{MesStup2013}. 

Recently introduced superadditive integral \cite{GMRS} deals with a fixed decomposition system $\mathcal{D}=\mathcal{R}_n$, and then the weighting function $A$ defined on $\tilde{\mathcal{D}}=\mathbb{R}_+^n$ is just an aggregation function. The superadditive integral $A^*:\mathbb{R}_+^n\to\mathbb{R}_+$ is given by $A^*(\x)=\sup{(\{ \sum_{j=1}^{k}A(\y^j)|\ \sum_{j=1}^{k}\y^j\leq\x \})}$.
Obviously, $A^*=I_{(A,\mathcal{R}_n)}$.

\noindent In the framework of super-decomposition based integrals, we recall that, for a monotone measure $m$:
\begin{itemize}
\item if $\mathcal{H}= \{\left(E_j\right)_{j=1}^k\ |\ \left(E_j\right)_{j=1}^k$ is a chain$\}$, then $I^{(A_m,\mathcal{D}_\mathcal{H})}$ is the Choquet integral;
\item if $\mathcal{H}=\{\left(E_j\right)_{j=1}^k\ |\ E_j\subseteq N, \ j=1,\ldots,k\}\setminus\{(\emptyset)\}$, then $I^{(A_m,\mathcal{D}_\mathcal{H})}$ is the convex integral recently introduced in \cite{MesiarLiPap}.
\end{itemize}

Also the subadditive integral $A_*:\mathbb{R}_+^n\to\mathbb{R}_+$ introduced in \cite{GMRS} can be seen as super-decomposition based integral, $A_*=I^{(A,\mathcal{R}_n)}$.

\subsection{The Choquet integral with respect to a level dependent capacity}

An example of an integral which cannot be considered a sub-decomposition based intgeral is the Choquet integral with respect to a level dependent capacity \cite{greco2011choquet}.
Given a set of criteria $N=\{1,\ldots,n\}$, a level dependent capacity is an index set $(\nu_t)_{t\in\rea_+}$ such that for all $t\in\rea_+$, $\nu_t:2^N\rightarrow [0,1]$ is a capacity.
The Choquet integral of $\x=(x_1,\ldots,x_n)\in\rea^n_+$ with respect to the level dependent capacity $(\nu_t)_{t\in\rea_+}$ is given by 
$Ch_l(\x,\nu_t)=\int_0^\infty{\nu_t(\{i\in N\ |\ x_i\ge t\})dt}$. 
In this case the integral brings too much information to be modeled via a decomposition of the integrated function, $\x=\y_1+\ldots +\y_k$, and weights assigned to addends $w(\y_1)$,...,$w(\y_n)$.
Consider the following example. Given $N=\{1,2,3\}$, and $\x=(3,2,5)$ it results
$Ch_l(\x,\nu_t)=\int_0^2{\nu_t(\{1,2,3\})dt}+ \int_2^3{\nu_t(\{1,3\})dt}+\int_3^5{\nu_t(\{3\})dt}$. 
This integral decomposition ``suggests'' the vector decomposition
$\x=(3,2,5)=(2-0)(1,1,1)+(3-2)(1,0,1)+(5-3)(0,0,1)=(2-0)\boldsymbol 1_{N}+(3-2)\boldsymbol 1_{\{1,3\}}+(5-3)\boldsymbol 1_{\{3\}}$, however to apply the decomposition approach we should assign weights to terns $(a,b,E)\in \rea_+^2\times 2^N$ with $a\le b$, being these weights $\int_a^b{\nu_t(E)dt}$.

\section{Particular decomposition based integrals}\label{particular}

Inspired by set decomposition systems recalled in Section 4, one can define particular vector decomposition systems. Namely we can consider:
\begin{itemize}
\item for a fixed $k\in\mathbb{N}$, $\mathcal{D}_k=\{(\y^j)_{j=1}^{k}|\ \y^{i}$ and $\y^j$ are comonotone for any $i,j\in\{1,\ldots,k \}\}$. Note that if each $\y^j=a_j\cdot\mathbf{1}_{E_j}$ for $a_j>0$ and $E_j\neq\emptyset$, then $(\y^j)_{j=1}^k\in\mathcal{D}_k$ if and only if $(E_j)_{j=1}^{k}$ is a chain in $N$, compare set decomposition system for the Choquet integral; and we denote $\mathcal{D}_{\infty}=\bigcup_{k=1}^{\infty}\mathcal{D}_k$;
\item for a fixed $k\in\{1,\ldots,n\}$, $\mathcal{D}^{(k)}=\{(\y^j)_{j=1}^k|\ supp\ \y^j\cap supp\ \y^{i}=\emptyset$ whenever $i\neq j\}$; these decomposition systems are related to set decomposition system inducing PAN-integral;
\item for a fixed $k\in\mathbb{N}$, $\mathcal{D}_{(k)}=\{(\y^j)_{j=1}^k\}$; clearly, $\mathcal{D}_{(\infty)}=\bigcup_{k=1}^{\infty}\mathcal{D}_{(k)}=\mathcal{R}_n$, and these decomposition systems are related to the concave (convex) integral.
\end{itemize}
Note that for $k=1$, $\mathcal{D}_1=\mathcal{D}^{(1)}=\mathcal{D}_{(1)}=\{(\y)|\ \y\in\mathbb{R}_+^n\}$, and then $\tilde{\mathcal{D}}_1=\mathbb{R}_+^n$. For any aggregation (weighting) function $A:\mathbb{R}_+^n\to\mathbb{R}_+$ it holds $I_{(A,\mathcal{D}_1)}=I^{(A,\mathcal{D}_1)}=A$. Moreover, $I_{(A,\mathcal{D}_{(\infty)})}=A^*$ and $I^{(A,\mathcal{D}_{(\infty)})}=A_*$, compare \cite{GMRS}.

We turn our attention to the decomposition system $\mathcal{D}_{\infty}$ (recall its relation to the Choquet integral). Due to Schmeidler \cite{schmeidler86},\cite{Schmeidler89}, Choquet integral can be characterized by the comonotone additivity. Recall that two vectors $\x$, $\y\in\mathbb{R}_+^n$ are comonotone whenever $(x_i-x_j)(y_i-y_j)\geq0$ for any $i$, $j\in\{1,\ldots,n\}$. The mutual comonotonicity of a collection $\mathcal{C}=(\y^j)_{j=1}^k\in\mathcal{D}_{\infty}$ means that there is a common chain $(E_r)_{r=1}^n$ in $N$ such that each $\y^j,\ j\in\{1,\ldots,k\}$, can be expressed as a linear combination $\y^j=\sum_{r=1}^n a_{r,j}\cdot \mathbf{1}_{E_r}$, with non-negative constants $a_{r,j}$. Moreover for any set $E\subseteq N$, the minimal values of set $\{y_i^j|\ i\in E\}$, $j=1,\ldots,k$, are attained in a single coordinate $i_E\in E$. This observation has an important consequence formalized in the next Lemma.

\begin{Lemma}\label{lemma}
Let $\x$, $\z\in\mathbb{R}_+^n$ be comonotone and let $\x=\sum_{j=1}^k\y^j,\ \z=\sum_{i=1}^m \mathbf{u}^i$, where $(\y^j)_{j=1}^k$ and $(\mathbf{u}^i)_{i=1}^m$ are comonotone systems. Then also $((\y^j)_{j=1}^k,\ (\mathbf{u}^{i})_{i=1}^m)$ is a comonotone system.
\end{Lemma}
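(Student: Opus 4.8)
The plan is to reduce the statement to a purely combinatorial fact about chains of subsets of $N$, using the characterization of mutual comonotonicity recalled just before the Lemma. First I would recall that a collection $(\y^j)_{j=1}^k$ being a comonotone system means all the $\y^j$ admit a \emph{common} chain $E_1\subseteq\cdots\subseteq E_n$ in $N$ such that $\y^j=\sum_{r=1}^n a_{r,j}\mathbf{1}_{E_r}$ with $a_{r,j}\ge 0$; equivalently, there is a single permutation $\pi$ of $N$ that simultaneously orders every $\y^j$, i.e. $y^j_{\pi(1)}\le y^j_{\pi(2)}\le\cdots\le y^j_{\pi(n)}$ for all $j$. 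So the hypothesis gives a permutation $\pi$ ordering all the $\y^j$, a permutation $\sigma$ ordering all the $\mathbf{u}^i$, and (from $\x$ comonotone with $\z$) the fact that $\x$ and $\z$ are themselves simultaneously orderable. The goal is to produce one permutation ordering every vector in the merged family $\{\y^1,\dots,\y^k,\mathbf{u}^1,\dots,\mathbf{u}^m\}$.

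The key step is to show that $\pi$ itself already orders all of $\x$, $\z$, every $\y^j$ and every $\mathbf{u}^i$. Since $\x=\sum_j\y^j$ with all $\y^j$ ordered by $\pi$, the sum $\x$ is ordered by $\pi$ as well. Now $\z$ is comonotone with $\x$, so on any pair of coordinates $a,b$ where $x_a<x_b$ we must have $z_a\le z_b$; the only ambiguity is on coordinates tied in $\x$. Here I would argue that within a block of coordinates on which $\x$ is constant, the comonotonicity constraints among $\x,\z$ do not force any order, so we are free to \emph{refine} $\pi$ within each such block using the order induced by $\z$ (which, being comonotone with itself trivially, is consistent). But we must simultaneously respect the orders of the individual $\mathbf{u}^i$. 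The clean way: pick the permutation $\tau$ that orders $\z$ and all the $\mathbf{u}^i$ (it exists by hypothesis on the second decomposition together with $\z$ being their sum), then observe that on each $\x$-constant block the vectors $\y^j$ are also constant? No --- that is false in general, so I must be more careful.

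Thus the real argument is a refinement/lexicographic merge: take $\pi$ ordering $\x$ and all $\y^j$, take $\tau$ ordering $\z$ and all $\mathbf{u}^i$, and build a new permutation $\rho$ that orders coordinates first by their $\x$-value (using $\pi$) and breaks ties by $\tau$. One checks $\rho$ orders $\x$ (same blocks as $\pi$), orders $\z$ (comonotone with $\x$ means $\z$ is constant-or-increasing across blocks, and within a block $\rho$ follows $\tau$ which orders $\z$), orders each $\mathbf{u}^i$ (each $\mathbf{u}^i$ is ordered by $\tau$, and within an $\x$-block $\rho=\tau$; across blocks one uses that $\mathbf{u}^i\le\z$ coordinatewise is \emph{not} available, so instead one uses comonotonicity of $\mathbf{u}^i$ with $\z$ --- but that is not given either). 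Here is the point where I expect the main obstacle, and the honest resolution is to invoke the structural description from the paragraph before the Lemma directly: mutual comonotonicity of $(\mathbf{u}^i)_{i=1}^m$ together with $\z=\sum_i\mathbf{u}^i$ forces every $\mathbf{u}^i$ to be comonotone with $\z$, and likewise every $\y^j$ comonotone with $\x$; then since $\x$ and $\z$ are comonotone, a standard fact (transitivity of comonotonicity through a common refining chain, exactly as in Schmeidler) shows the whole merged family admits a common ordering permutation, i.e. is a comonotone system.

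So, concretely, the proof I would write has three short movements: (i) from $\z=\sum_i\mathbf{u}^i$ with the $\mathbf{u}^i$ mutually comonotone, deduce each $\mathbf{u}^i$ is comonotone with $\z$ (this is immediate from the ``common chain'' description: all $\mathbf{u}^i$ live on the chain $(E_r)$, hence so does $\z$, hence all share the ordering permutation of that chain), and symmetrically each $\y^j$ comonotone with $\x$; (ii) combine with the hypothesis that $\x$ and $\z$ are comonotone to get that $\x$, $\z$, all $\y^j$, all $\mathbf{u}^i$ pairwise admit compatible orderings --- then take a common refinement permutation $\rho$ of $N$ respecting a chain that refines both the $\x$-chain and the $\z$-chain (available precisely because $\x,\z$ are comonotone, so their associated chains can be taken nested/compatible); (iii) conclude that relative to $\rho$ every vector in $((\y^j)_{j=1}^k,(\mathbf{u}^i)_{i=1}^m)$ has non-decreasing coordinates, which is exactly the statement that this merged system is comonotone. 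The only genuinely delicate point is (ii), ensuring the two chains can be simultaneously refined into one chain; this is where comonotonicity of $\x$ and $\z$ is essential and is the place I would spend the most care, but it is the same elementary ``simultaneous ordering'' fact underlying Schmeidler's comonotone additivity, so it can be cited or dispatched in a line.
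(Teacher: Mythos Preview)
Your instinct in the second paragraph was correct and you abandoned it too quickly. The statement ``on each $\x$-constant block the vectors $\y^j$ are also constant'' is \emph{true}, not false: if $(\y^j)_{j=1}^k$ share an ordering permutation $\pi$ and $\x=\sum_j\y^j$, then $x_{\pi(1)}\le\cdots\le x_{\pi(n)}$; whenever $x_a=x_b$ with $a=\pi(s)$, $b=\pi(t)$, $s<t$, the chain of inequalities $y^j_{\pi(s)}\le\cdots\le y^j_{\pi(t)}$ sums (over $j$) to an equality, forcing $y^j_{\pi(s)}=\cdots=y^j_{\pi(t)}$ for every $j$. The analogous statement holds for the $\mathbf{u}^i$ on $\z$-constant blocks. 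With this in hand your lexicographic permutation $\rho$ (order coordinates first by $\x$-value, break ties by $\z$-value) does everything: it orders each $\y^j$ because the $\y^j$ are constant within $\x$-blocks and monotone across them; it orders each $\mathbf{u}^i$ because across $\x$-blocks comonotonicity of $\x,\z$ gives $z_a\le z_b$, hence (by the $\z$-block constancy) $u^i_a\le u^i_b$, while within $\x$-blocks $\rho$ follows the $\z$-order and the $\mathbf{u}^i$ are constant on $\z$-blocks. This is exactly the content of the paper's observation just before the Lemma (the common coordinate $i_E$ minimising all $y^j$ over $E$), which the paper treats as the whole proof.

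The fallback route you actually wrote in (i)--(iii) has a genuine gap. Knowing that each $\y^j$ is comonotone with $\x$, that $\x$ is comonotone with $\z$, and that $\z$ is comonotone with each $\mathbf{u}^i$ does \emph{not} by itself yield pairwise comonotonicity of $\y^j$ with $\mathbf{u}^i$: comonotonicity is not transitive (any vector is comonotone with a constant vector). Your appeal to a ``common refinement chain'' for $\x$ and $\z$ is fine, but the step asserting that this single permutation then orders every $\y^j$ and every $\mathbf{u}^i$ needs precisely the block-constancy fact you discarded. Reinstate that fact and your proof is complete and essentially the paper's.
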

Based on Lemma \ref{lemma}, we have the next characterization of $I_{(A,\mathcal{D}_{\infty})}$.

\begin{Theo}
Let $A:\mathbb{R}_+^n\to\mathbb{R}_+$ be an aggregation function such that $S_{\left(A,\mathcal{D}_{\infty}\right)}=\mathcal{R}_n$. Then $I_{(A,\mathcal{D}_{\infty})}$ is the smallest comonotone superadditive aggregation function dominating $A$, and for each $\x\in\mathbb{R}_+^n$, $I_{(A,\mathcal{D}_{\infty})}(\x)=\min{\left\{C(\x)|\ C\geq A\right.}$, $C$ is a comonotone superadditive aggregation function\}.
\end{Theo}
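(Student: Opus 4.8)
The plan is to establish two facts and combine them. Fact~1: $I:=I_{(A,\mathcal{D}_\infty)}$ is itself a comonotone superadditive aggregation function with $I\ge A$. Fact~2: $I\le C$ for \emph{every} comonotone superadditive aggregation function $C$ with $C\ge A$. Together these say that $I$ is a member of the family in question that lies below every member, hence is its pointwise minimum; this is exactly the claim, with the $\min$ attained at $C=I$.

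Fact~2 is the routine direction. Fix such a $C$ and $\x\in\rea^n_+$, and let $(\y^j)_{j=1}^k\in\mathcal{D}_\infty$ be a comonotone collection with $\sum_{j=1}^k\y^j\le\x$. A comonotone system is contained in a single comonotone cone (the common chain recalled before Lemma \ref{lemma}), so every partial sum $\sum_{j=1}^{\ell}\y^j$ is comonotone with $\y^{\ell+1}$; applying comonotone superadditivity of $C$ repeatedly gives $\sum_{j=1}^k C(\y^j)\le C\!\left(\sum_{j=1}^k\y^j\right)$. Hence $\sum_{j=1}^k A(\y^j)\le\sum_{j=1}^k C(\y^j)\le C\!\left(\sum_{j=1}^k\y^j\right)\le C(\x)$, using $A\le C$ and monotonicity of $C$. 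Taking the supremum over all admissible collections yields $I(\x)\le C(\x)$.

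For Fact~1, monotonicity of $I$ follows from Lemma \ref{mon}, $I(\boldsymbol 0)=0$ from $A(\boldsymbol 0)=0$, and $I\ge A$ from the fact that the one-term collection $(\x)$ lies in $\mathcal{D}_1\subseteq\mathcal{D}_\infty$; combined with the hypothesis $S_{(A,\mathcal{D}_\infty)}=\rea^n_+$, which makes $I$ real-valued on all of $\rea^n_+$, this shows $I$ is an aggregation function $\rea^n_+\to\rea_+$. The substantive point is comonotone superadditivity. The naive approach — take near-optimal decompositions of $\x$ and of $\z$ and concatenate — fails, because the concatenation of two comonotone systems need not be comonotone: the sub-sums $\sum_j\y^j\le\x$ and $\sum_i\mathbf{u}^i\le\z$ need not be mutually comonotone even when $\x,\z$ are. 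I would get around this with the following reduction, which I regard as the heart of the proof: \emph{if $\rho$ is a permutation with $x_{\rho(1)}\le\cdots\le x_{\rho(n)}$, then any comonotone system $(\y^j)_{j=1}^k$ with $\sum_j\y^j\le\x$ can be replaced by a comonotone system $(\tilde\y^j)_{j=1}^k$, all of whose members are ordered along $\rho$ (hence lie in the same cone as $\x$), with $\tilde\y^j\ge\y^j$ for each $j$ and still $\sum_j\tilde\y^j\le\x$.} To prove this I set $\tilde y^j_{\rho(\ell)}:=\max\{y^j_{\rho(1)},\ldots,y^j_{\rho(\ell)}\}$. Then each $\tilde\y^j$ dominates $\y^j$ and is ordered along $\rho$; and since all $\y^j$ share one order type of coordinates, for each $\ell$ this maximum is attained at one and the same coordinate $c_\ell\in\{\rho(1),\ldots,\rho(\ell)\}$ for every $j$, whence $\sum_j\tilde y^j_{\rho(\ell)}=\sum_j y^j_{c_\ell}=\big(\sum_j\y^j\big)_{c_\ell}\le x_{c_\ell}\le x_{\rho(\ell)}$, the last step because $c_\ell$ occurs among $\rho(1),\ldots,\rho(\ell)$ and $\x$ is ordered along $\rho$. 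Monotonicity of $A$ gives $\sum_j A(\tilde\y^j)\ge\sum_j A(\y^j)$, so in the supremum defining $I(\x)$ one may restrict to decompositions living in a single comonotone cone containing $\x$.

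Granting this reduction, comonotone superadditivity is immediate: given comonotone $\x,\z$, pick a permutation $\rho$ ordering $\x$, $\z$ and $\x+\z$ simultaneously, fix $\varepsilon>0$, and choose $\varepsilon$-near-optimal decompositions of $\x$ and of $\z$ all of whose members are ordered along $\rho$; their concatenation is then a comonotone system in $\mathcal{D}_\infty$ whose sum is $\le\x+\z$, so $I(\x+\z)\ge I(\x)+I(\z)-2\varepsilon$, and $\varepsilon\to0$ finishes it. Putting the two facts together, $I$ is a comonotone superadditive aggregation function dominating $A$ that is dominated by every other such function, hence the smallest one, and for each $\x$ the value $\min\{C(\x)\mid C\ge A,\ C\text{ a comonotone superadditive aggregation function}\}$ is attained at $C=I$. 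The only delicate step is the cone-reduction lemma; everything else is bookkeeping with Definition \ref{subdecompositionI}, Lemma \ref{mon}, and the common-chain description of comonotone systems from Section \ref{particular}.
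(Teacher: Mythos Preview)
Your proof is correct and, on the key point of comonotone superadditivity, actually more careful than the paper's argument. The paper proves this step by invoking Lemma~\ref{lemma}, which concerns \emph{exact} decompositions $\sum_j\y^j=\x$ and $\sum_i\mathbf{u}^i=\z$; accordingly, in the displayed chain the integral is written with the constraint $\sum_r\mathbf{v}^r=\x+\z$ rather than $\le\x+\z$. The passage from the genuine definition (with $\le$) to the $=$-version is not justified there, and, as your counterexample in spirit shows (e.g.\ $\x=\z=(1,1)$ with the single-term systems $((1,0))$ and $((0,1))$), Lemma~\ref{lemma} fails under the weaker hypothesis $\sum_j\y^j\le\x$, $\sum_i\mathbf{u}^i\le\z$.

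Your cone-reduction lemma circumvents this gap: it shows that in computing $I_{(A,\mathcal{D}_\infty)}(\x)$ one may restrict to comonotone collections living in a fixed simplicial cone $K_\rho$ containing $\x$, so that for comonotone $\x,\z$ the near-optimal decompositions can be chosen in a common $K_\rho$ and hence concatenate to a member of $\mathcal{D}_\infty$. The lemma itself is a clean application of the ``common argmax'' property of comonotone systems (essentially the dual of the observation preceding Lemma~\ref{lemma} about common minimizers). Your treatment of Fact~2 is also a bit more explicit than the paper, which defers that direction to \cite{GMRS}; the inductive use of comonotone superadditivity along partial sums is exactly right, since sums of vectors in $K_\sigma$ remain in $K_\sigma$. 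In short: same destination, but your route via the cone-reduction lemma is self-contained and handles the $\le$-constraint directly, whereas the paper's route via Lemma~\ref{lemma} tacitly relies on the (unproved here) coincidence of the $=$- and $\le$-suprema.
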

\begin{proof}
We only prove the comonotone superadditivity of $I_{(A,\mathcal{D}_{\infty})}$, while the rest of proof can be done similarly as in \cite{GMRS} (Proposition 2). Fix a comonotone couple $\x$, $\z\in\mathbb{R}_+^n$. Based on Lemma \ref{lemma}, (it implies the first inequality)

$$I_{(A,\mathcal{D}_{\infty})}(\x+\z)=\sup{\{\sum_{r=1}^p A(\mathbf{v}^r)|\ \sum_{r=1}^p \mathbf{v}^r=\x+\z,\ (\mathbf{v}^r)_{r=1}^p\in\mathcal{D}_{\infty} \}}\geq$$
$$\geq \sup{\{\sum_{j=1}^k A(\y^j)+\sum_{i=1}^m A(\mathbf{u}^i)|\ \sum_{j=1}^k \y^j=\x,\ \sum_{i=1}^m \mathbf{u}^i=\z,(\y^j)_{j=1}^k,\ (\mathbf{u}^{i})_{i=1}^m\in\mathcal{D}_{\infty}\}}\geq$$
 $$\geq\sup{\{\sum_{j=1}^k A(\y^j)|\ \sum_{j=1}^k \y^j=\x,\ (\y^j)_{j=1}^k\in\mathcal{D}_{\infty}\}}+$$
$$+\sup{\{\sum_{i=1}^m A(\mathbf{u}^i)|\ \sum_{i=1}^m \mathbf{u}^i=\z,\ (\mathbf{u}^i)_{i=1}^m\in\mathcal{D}_{\infty}\}}= $$
 $$=I_{(A,\mathcal{D}_{\infty})}(\x)+I_{(A,\mathcal{D}_{\infty})}(\z).$$
\begin{flushleft}
$\square$

\end{flushleft}
\end{proof}

\begin{examp}
Define $A:\mathbb{R}_+^2\to\mathbb{R}_+$ by $A(x,y)=\max{(\ln (1+x),\ \ln (1+y))}$. Then $A^*(x,y)=I_{(A,\mathcal{D}_{\infty})},\ (x,y)=x+y$, but $A_{(A,\mathcal{D}_{\infty})}(x,y)=\max{\{x,y\}}$. Observe that $\max{}$ is not superadditive but it is comonotone superadditive.
\end{examp}
A similar result can be shown where considering $\mathcal{D}^{(\infty)}$ decomposition system. We omit its proof due to its simplicity.

\begin{Theo}
Let $A:\mathbb{R}_+^n\to\mathbb{R}_+$ be an aggregation function such that $S_{\left(A,\mathcal{D}^{(\infty)}\right)}=\mathcal{R}_n$.
Then $I_{(A,\mathcal{D}^{(\infty)})}$ is the smallest aggregation function which is superadditive for vectors with disjoint supports, i.e., $I_{(A,\mathcal{D}^{(\infty)})}(\x+\z)\geq I_{(A,\mathcal{D}^{(\infty)})}(\x)+I_{(A,\mathcal{D}^{(\infty)})}(\z)$ whenever $\x\wedge\z=\mathbf{0}$.
\end{Theo}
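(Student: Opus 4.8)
The plan is to mimic the structure of the proof of the previous theorem (the $\mathcal{D}_\infty$ case), replacing ``comonotone'' throughout by ``having pairwise disjoint supports''. First I would establish the analogue of Lemma \ref{lemma} for the system $\mathcal{D}^{(\infty)}$: if $\x\wedge\z=\mathbf{0}$ and $\x=\sum_{j=1}^k\y^j$, $\z=\sum_{i=1}^m\mathbf{u}^i$ with $(\y^j)_{j=1}^k\in\mathcal{D}^{(k)}$ and $(\mathbf{u}^i)_{i=1}^m\in\mathcal{D}^{(m)}$, then the concatenated system $((\y^j)_{j=1}^k,(\mathbf{u}^i)_{i=1}^m)$ again has pairwise disjoint supports. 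This is essentially immediate: since $\x$ and $\z$ have disjoint supports, $\operatorname{supp}\y^j\subseteq\operatorname{supp}\x$ and $\operatorname{supp}\mathbf{u}^i\subseteq\operatorname{supp}\z$ are disjoint from one another, while disjointness within each block is given; hence the concatenation lies in $\mathcal{D}^{(k+m)}\subseteq\mathcal{D}^{(\infty)}$.

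Next I would prove superadditivity over disjoint supports exactly as in the $\mathcal{D}_\infty$ theorem: fix $\x,\z$ with $\x\wedge\z=\mathbf{0}$; every pair of admissible decompositions of $\x$ and $\z$ concatenates to an admissible decomposition of $\x+\z$ by the disjoint-support version of Lemma \ref{lemma}, so
\begin{align*}
I_{(A,\mathcal{D}^{(\infty)})}(\x+\z)&\geq\sup\Bigl\{\textstyle\sum_j A(\y^j)+\sum_i A(\mathbf{u}^i)\ \Big|\ \textstyle\sum_j\y^j=\x,\ \sum_i\mathbf{u}^i=\z\Bigr\}\\
&=I_{(A,\mathcal{D}^{(\infty)})}(\x)+I_{(A,\mathcal{D}^{(\infty)})}(\z),
\end{align*}
where the splitting of the supremum over an independent product of feasible sets gives the final equality. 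Monotonicity, the boundary condition $I_{(A,\mathcal{D}^{(\infty)})}(\mathbf{0})=0$, and the fact that it is an aggregation function with the correct top behaviour follow as in Remark after Proposition \ref{base3}; that $I_{(A,\mathcal{D}^{(\infty)})}\geq A$ is clear since the singleton collection $(\x)\in\mathcal{D}^{(1)}\subseteq\mathcal{D}^{(\infty)}$ is always feasible in the supremum.

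For minimality I would argue that any aggregation function $C\geq A$ which is superadditive over disjoint supports dominates $I_{(A,\mathcal{D}^{(\infty)})}$: given any feasible collection $(\y^j)_{j=1}^k\in\mathcal{D}^{(\infty)}$ with $\sum_j\y^j=\x$, iterating disjoint-support superadditivity of $C$ gives $C(\x)=C(\sum_j\y^j)\geq\sum_j C(\y^j)\geq\sum_j A(\y^j)$; taking the supremum over all such collections yields $C(\x)\geq I_{(A,\mathcal{D}^{(\infty)})}(\x)$. Since $I_{(A,\mathcal{D}^{(\infty)})}$ is itself such a $C$, it is the smallest one. The only mild subtlety — and the single place worth spelling out — is that the iterated application of the disjoint-support superadditivity of $C$ is legitimate, i.e. that for a collection in $\mathcal{D}^{(k)}$ the partial sums $\y^1,\ \y^1+\y^2,\ \ldots$ keep disjoint supports from the next summand; this holds precisely because all supports in the collection are pairwise disjoint, so no obstacle arises. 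I do not expect any genuinely hard step here — the authors themselves call the proof ``simple'' — the main point is just to notice that the disjoint-support structure is closed under concatenation, which is what makes the argument parallel the Choquet/comonotone case verbatim.
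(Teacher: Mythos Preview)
Your proposal is correct and is exactly what the authors intend: the paper omits this proof as ``simple'', remarking only that it parallels the $\mathcal{D}_\infty$ case, which is precisely the route you take (disjoint-support analogue of Lemma~\ref{lemma}, then the same supremum chain, then minimality via domination). The one cosmetic slip---writing $\sum_j\y^j=\x$ rather than $\sum_j\y^j\leq\x$ in the superadditivity display---is inherited from the paper's own proof of the preceding theorem and does not affect the argument, since $\operatorname{supp}\y^j\subseteq\operatorname{supp}\x$ already follows from $\sum_j\y^j\leq\x$.
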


Similar chains can be shown for the super-decomposition based integrals.

\section{Conclusions}\label{concluding}
In this paper we have studied decomposition approaches to integration generalizing previous works (see \cite{MesStup2013}, \cite{EvenLehrer2014} and \cite{MesiarLiPap}). 
Our general approach to integration is based on three steps: (a) sub/super sum decomposition of integrated functions;
(b) weighting of the addend functions used in decompositions; (c) sum aggregation of these weighted addend functions and choice of extremal elements ($\sup/\inf$) to define the integral.
The final integral depends (other that on the choice of sub/super-decomposition) on the set of allowable functions used to decompose the integrated function in step (a), and on the weighting function used to weigh addend functions 
in step (b).
Note that this approach can be further generalized by replacing standard addition in step (c) with a pseudo-addition.
For example taking any decomposition system $\mathcal D$ such that 
$\tilde{\mathcal{D}}= \{c\cdot 1_E\ |\  c\in [0,\infty], E \subseteq  N\}$, and putting as pseudo-addition $\max$, and as weighting function $A(c\cdot 1_E) = c\cdot m(E)$ being $m:2^N\rightarrow \rea_+$ a measure, the resulting integral is the Shilkret integral; if $A(c\cdot 1_E) = \min(c,m(E))$, Sugeno integral is obtained. 
$\tilde{\mathcal{D}}$ can be finite, consider Ali Baba in the cave with precious things from Gold, only their weight matters, since his donkey can take only $\x$ kg. Ali Baba can take any good he wants, but only one. 
In this case we have $\tilde{\mathcal{D}}=\{g_1,\ldots,g_k\}$, $g_i$ are all possible precious goods in the cave, characterized by their weight $g_i$ and, then, the weighting function is $A(d) = d$, and thus $I^{\max}_{(A,\mathcal D)}(\x) = \max\{g_i\ |\ g_i \le \x\}$. 
Note that if Ali Baba has no limitation in the number of goods but only in the weight $\x$, we have to use our approach based on addition, and then $I_{(A,\mathcal D)}(\x) = \max\{\sum_{i\in I}{g_i}\ | \sum_{i\in I}{g_i} \le \x\}$ and, then, surely $I_{(A,\mathcal D)}(\x)\ge I^{\max}_{(A,\mathcal D)}(\x)$.
This last example recalls a very famous example in literature, the so called \textit{knapsack problem} \cite{martello1990knapsack}. The knapsack problem or rucksack problem is a problem in combinatorial optimization, where, given a set of items, each with a mass and a value, we have to determine the number of each item to include in a collection so that the total weight is not greater than a given limit and the total value is as large as possible. 
The knapsack problem has been studied for more than a century (for example in combinatorics or in the field of resource allocation), and it is straightforward that it can be faced by using our sub-decomposition based integration.

Let us note that in the last step of our construct method for decomposition integrals, we choose the extremal elements of the set of weighted addend functions, that is $I_{(A,\mathcal D)}(\x)=\sup\{\ldots\}$ and $I^{(A,\mathcal D)}(\x)=\inf\{\ldots\}$ and this to link our integrals to optimization problems that usually arise in economics.
Once again, a further generalization is to define the decomposition integral not as the extremal element of the set of all weighted sums of integrated function decompositions, but as a representative element of this set, and, finally, we could consider as integral the whole set, following an approach a l\'a Aumann \cite{aumann1965integrals}. 

\section*{Acknowledgment}
The work on this contribution was partially supported by VEGA 1/0425/15.

\end{document}